\definecolor{lightgray}{rgb}{0.8, 0.8, 0.8}
\definecolor{darkgray}{rgb}{0.7, 0.7, 0.7}
\definecolor{darkblue}{rgb}{0, 0, .4}
\newcounter{todocounter}
\theoremstyle{plain}
\newtheorem{theorem}{Theorem}[section]
\newtheorem{proposition}[theorem]{Proposition}
\theoremstyle{definition}
\newenvironment{proofsketch}{\noindent{\it Sketch of proof.}}{\qed\bigskip}
\newfont{\footsc}{cmcsc10 at 8truept}
\newfont{\footbf}{cmbx10 at 8truept}
\newfont{\footrm}{cmr10 at 10truept}
\renewenvironment{abstract}%
                {
                  \begin{list}{}%
                     {\setlength{\rightmargin}{1in}%
                      \setlength{\leftmargin}{1in}}%
                   \item[]\ignorespaces\begin{small}}%
                 {\end{small}\unskip\end{list}}
\newcommand{\C}{\mathcal{C}}
\newcommand{\V}{\mathcal{V}}
\newcommand{\W}{\mathcal{W}}
\newcommand{\vect}[1]{\left\langle #1 \right\rangle}
\newcommand{\OEISlink}[1]{\href{http://oeis.org/#1}{#1}}
\newcommand{\OEISref}{\href{http://oeis.org/}{OEIS}~\cite{sloane:the-on-line-enc:}}
\newcommand{\Grid}{\operatorname{Grid}}
\newcommand{\st}{\::\:}
\newcommand{\p}[1]{#1^+}
\newcommand{\m}[1]{#1^-}
\renewcommand{\d}[1]{#1^{\bullet}}
\newcommand\diag[4]{%
  \multicolumn{1}{p{#2}|}{\hskip-\tabcolsep
  $\vcenter{\begin{tikzpicture}[baseline=0,anchor=south west,inner sep=#1]
  \path[use as bounding box] (0,0) rectangle (#2+2\tabcolsep,\baselineskip);
  \node[minimum width={#2+2\tabcolsep},minimum height=\baselineskip+\extrarowheight] (box) {};
  \draw (0.25ex,\baselineskip+0.05ex) -- (box.south east);
  \node[overlay] at (0.25ex,-0.05ex) {$k$};
  \node[overlay] at (2.25ex,1ex) {$n$};
  \end{tikzpicture}}$\hskip-\tabcolsep}
}
\newcommand{\diagnk}{\diag{0.1em}{0.1cm}{$k$}{$n$}}
\newcommand{\nc}[1]{{n \choose #1}}
\title{\sc On the Effective and Automatic Enumeration of Polynomial Permutation Classes}
\author{
	\begin{tabular}{cc}
        Cheyne Homberger&Vincent Vatter\footnote{Vatter's research was sponsored by the National Security Agency under Grant Number H98230-12-1-0207 and the National Science Foundation under Grant Number DMS-1301692.  The United States Government is authorized to reproduce and distribute reprints not-withstanding any copyright notation herein.}\\
		{\small Department of Mathematics \& Statistics}&{\small Department of Mathematics}\\[-3pt]
		{\small University of Maryland, Baltimore County}&{\small University of Florida}\\[-3pt]
		{\small Baltimore, Maryland, USA}&{\small Gainesville, Florida, USA}\\[-10pt]
	\end{tabular}
}
\date{}
\begin{document}
\maketitle

\pagestyle{main}

\begin{abstract}
We describe an algorithm, implemented in Python, which can enumerate any permutation class with polynomial enumeration from a structural description of the class. In particular, this allows us to find formulas for the number of permutations of length $n$ which can be obtained by a finite number of block sorting operations (e.g., reversals, block transpositions, cut-and-paste moves).
\end{abstract}

\section{Introduction}\label{sec-poly-intro}

The Fibonacci Dichotomy of Kaiser and Klazar~\cite{kaiser:on-growth-rates:} was one of the first general results on the enumeration of permutation classes. It states that if there are fewer permutations of length $n$ in a class than the $n$th Fibonacci number, for any $n$, then the enumeration of the class is given by a polynomial for sufficiently large $n$. Since the Fibonacci Dichotomy was established for permutation classes, Balogh, Bollob\'as, and Morris~\cite{balogh:hereditary-prop:ordgraphs} showed that it extends to the (more general) context of ordered graphs, while other proofs of the Fibonacci Dichotomy for permutations have been given by Huczynska and Vatter~\cite{huczynska:grid-classes-an:} and Albert, Atkinson, and Brignall~\cite{albert:permutation-cla:}.

While much of the focus on this strand of research has shifted to the consideration of larger classes (see Bollob\'as~\cite{bollobas:hereditary-and-:BCC:}, Klazar~\cite{klazar:overview-of-som:}, and Vatter~\cite{vatter:permutation-cla:} for surveys), we return to consider two open questions about polynomial classes. 

\begin{itemize}
\item {\bf Question 1.1.} Given a structural description of a polynomial permutation class, how can we enumerate it?
\addtocounter{theorem}{1}
\item {\bf Question 1.2.} Which polynomials occur as enumerations of polynomial classes?
\addtocounter{theorem}{1}
\end{itemize}

We view a satisfactory answer to Question 1.1 as a prerequisite for the investigation of Question 1.2, and thus our focus in this paper is on enumerating polynomial classes from a structural description. Our answer to Question 1.1 also has applications to the study of genome rearrangements, as discussed in Section~\ref{sec-genome-rearrangement}. In particular, the algorithm can be applied to the problem of evolutionary distance, which investigates the number of genomes of fixed mutation distance from the identity.

The permutation $\pi$ of length $n$ contains the permutation $\sigma$ of length $k$ (written $\sigma\le\pi$) if $\pi$ has a subsequence of length $k$ which is order isomorphic to $\sigma$. For example, $\pi=391867452$ (written in list, or one-line notation) contains $\sigma=51342$, as can be seen by considering the subsequence $91672$ ($=\pi(2)\pi(3)\pi(5)\pi(6)\pi(9)$). A \emph{permutation class}, or simply \emph{class}, is a downset in this subpermutation order; thus if $\C$ is a class, $\pi\in\C$, and $\sigma\le\pi$, then $\sigma\in\C$.

While there are many ways to specify a class, two are particularly relevant to this problem. One is by the \emph{basis} of the class, the minimal permutations \emph{not} in the class. One may also specify a polynomial class by providing some structural description. We adopt this structural approach to the specification of classes.

We must first formalize the notion of the structure of polynomial classes. Following Albert and Atkinson~\cite{albert:simple-permutat:}, an \emph{interval} in a permutation is a sequence of contiguous entries whose values form an interval of natural numbers. A \emph{monotone interval} is an interval in which the entries are monotone (increasing or decreasing). Given a permutation $\sigma$ of length $m$ and nonempty permutations $\alpha_1,\dots,\alpha_m$, the \emph{inflation} of $\sigma$ by $\alpha_1,\dots,\alpha_m$ is the permutation $\pi=\sigma[\alpha_1,\dots,\alpha_m]$ obtained by replacing each entry $\sigma(i)$ by an interval that is order isomorphic to $\alpha_i$, while maintaining the relative order of the intervals themselves. For example,
\[
	3142[1,321,1,12]=6\ 321\ 7\ 45.
\]
Going against traditional conventions, in this work we \emph{allow inflations by the empty permutation} unless specifically forbidden.

The polynomial classes are very special cases of geometric grid classes~\cite{albert:geometric-grid-:}, and they can therefore be described, roughly, as classes for which the entries of every member of the class can be partitioned into a finite number of monotone intervals, which are related to each other in one of a finite number of ways. To describe this more concretely, let us say that a \emph{peg permutation} is a permutation where each entry is decorated with a $+$, $-$, or $\bullet$, such as
\[
\tilde{\rho}=\d{3}\m{1}\d{4}\p{2}
\]
As demonstrated above, we decorate peg permutations with tildes; in this context, $\rho$ denotes for us the underlying (non-pegged) permutation, $3142$ in this example.

The \emph{grid class} of the peg permutation $\tilde{\rho}$, denoted $\Grid(\tilde{\rho})$, is the set of all permutations which may be obtained by inflating $\rho$ by monotone intervals of type determined by the signs of $\tilde{\rho}$: $\rho(i)$ may be inflated by an increasing (resp., decreasing) interval if $\tilde{\rho}(i)$ is decorated with a $+$ (resp., $-$) while it may only be inflated by a single entry (or the empty permutation) if $\tilde{\rho}(i)$ is dotted. Thus $\pi\in\Grid(\tilde{\rho})$ if its entries can be partitioned into monotone intervals which are compatible with $\tilde{\rho}$; we refer to this as a \emph{$\tilde{\rho}$-partition} of $\pi$.

Given a set $\tilde{G}$ of peg permutations, we denote the union of their corresponding grid classes by
\[
\Grid(\tilde{G})=\bigcup_{\tilde{\rho}\in\tilde{G}} \Grid(\tilde{\rho}).
\]
As the next result shows, our goal is to enumerate such classes.

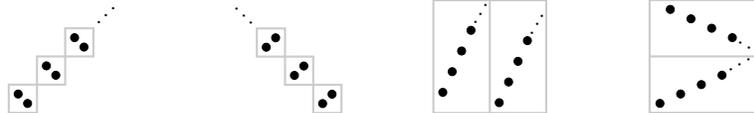
\begin{figure}
\begin{center}
\begin{tabular}{ccccccc}
	\begin{tikzpicture}[scale=0.125]
		  \foreach \i in {1, 4, 7}{
		    \draw[thick, color=lightgray, line cap=round] (\i,\i)--(\i, \i+3)--(\i+3, \i+3)--(\i+3,\i)--cycle;
		    \draw[fill = black] (\i+2, \i+1) circle (12pt);
		    \draw[fill = black] (\i+1, \i+2) circle (12pt);
		  }
	      \node [rotate=45] at (11.5,11.5) {{\footnotesize $\dots$}};
	  \end{tikzpicture}
&\quad\quad&
	  \begin{tikzpicture}[scale=0.125]
		  \foreach \i in {1, 4, 7}{
		    \draw[thick, color=lightgray, line cap=round] (\i,12-\i)--(\i, 12-\i-3)--
		    (\i+3,12-\i-3)--(\i+3,12-\i)--cycle;
		    \draw[fill = black] (\i+2, 12-\i-1) circle (12pt);
		    \draw[fill = black] (\i+1, 12-\i-2) circle (12pt);
		    }
  	      \node [rotate=-45] at (-0.25,12.25) {{\footnotesize $\dots$}};
	  \end{tikzpicture}
&\quad\quad&
	  \begin{tikzpicture}[scale=0.125]
		    \draw[thick, color=lightgray, line cap=round] (6,0)--(6,12)--(12,12)--(12,0)--(0,0)--(0,12)--(6,12);
		
		  \foreach \y [count = \x] in {2,4,6,8}
		    \draw[fill = black] (\x, 1.1*\y) circle (12pt);
		  \foreach \y [count = \x] in {1,3,5,7}
		    \draw[fill = black] (\x + 6, 1.1*\y) circle (12pt);
		
		  \node [rotate=60] at (5.125,10.75) {{\footnotesize $\dots$}};
		  \node [rotate=60] at (11.125,9.625) {{\footnotesize $\dots$}};
	  \end{tikzpicture}
&\quad\quad&
	  \begin{tikzpicture}[scale=0.125, rotate=-90]
		    \draw[thick, color=lightgray, line cap=round] (6,0)--(6,12)--(12,12)--(12,0)--(0,0)--(0,12)--(6,12);
		
		  \foreach \y [count = \x] in {2,4,6,8}
		    \draw[fill = black] (\x, 1.1*\y) circle (12pt);
		  \foreach \y [count = \x] in {1,3,5,7}
		    \draw[fill = black] (12-\x, 1.1*\y) circle (12pt);
		  \node [rotate=30] at (6.75,9.75) {{\footnotesize $\dots$}};
		  \node [rotate=-30] at (5.125,10.75) {{\footnotesize $\dots$}};
	  \end{tikzpicture}
\end{tabular}
\end{center}
\caption{The two permutations shown on the left are the obstructions which prevent a class from being ``monotone griddable''. The two permutations on the right (and all of their symmetries) are the obstructions which prevent a monotone griddable class from being a polynomial class.}
\label{fig-mono-grid-obstructions}
\end{figure}

\begin{theorem}[The combination of the results of~{\cite{huczynska:grid-classes-an:}} and {\cite[Theorem 10.3]{albert:geometric-grid-:}}]
\label{thm-poly-tfae}
For a permutation class $\C$ the following are equivalent:
\begin{enumerate}
\item[(1)] $|\C_n|$ is given by a polynomial for all sufficiently large $n$,
\item[(2)] $|\C_n|<F_n$ for some $n$,
\item[(3)] $\C$ does not contain arbitrary long permutations of any of the forms 
shown in Figure~\ref{fig-mono-grid-obstructions} (or any symmetries of those), and
\item[(4)] $\C=\Grid(\tilde{G})$ for a finite set $\tilde{G}$ of peg 
permutations.
\end{enumerate}
\end{theorem}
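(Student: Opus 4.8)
The plan is to prove the four conditions equivalent by closing the cycle $(1)\Rightarrow(2)\Rightarrow(3)\Rightarrow(4)\Rightarrow(1)$, doing the easy endpoints by hand and delegating the structural heavy lifting to the two cited results. The two painless links are $(1)\Rightarrow(2)$ and $(4)\Rightarrow(1)$. For $(1)\Rightarrow(2)$: if $|\mathcal{C}_n|$ agrees with a polynomial for all large $n$, then since $F_n$ grows exponentially we eventually have $|\mathcal{C}_n|<F_n$, which is exactly $(2)$. The converse $(2)\Rightarrow(1)$ is precisely the Fibonacci Dichotomy of Kaiser and Klazar, so I would simply invoke it.

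For $(4)\Rightarrow(1)$ I would count directly. A permutation in $\Grid(\tilde{\rho})$ is specified by the lengths of the monotone intervals inflating each entry of $\rho$, where a dotted entry contributes a length in $\{0,1\}$ and a signed entry an arbitrary (possibly empty) monotone block. The number of such length vectors summing to $n$ has generating function a product of factors $1/(1-x)$ and $(1+x)$, hence a rational function whose only pole is at $1$, so the coefficient is eventually polynomial. The genuine subtlety here is over-counting: a single permutation may admit several $\tilde{\rho}$-partitions. I would control this by noting that the number of distinct partitions is bounded and that the count of honestly distinct permutations can be recovered by inclusion–exclusion over coarser/finer grid structures, each of which is again a finite union of peg-permutation grid classes; polynomiality is preserved under these finite operations, and the finite union $\Grid(\tilde{G})$ is handled the same way.

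The structural core is $(3)\Rightarrow(4)$, where the two cited theorems combine. The absence of arbitrarily long copies of the two left-hand obstructions in Figure~\ref{fig-mono-grid-obstructions} is exactly the hypothesis of the monotone-griddability criterion, so by Huczynska--Vatter Corollary~3.4 the class $\mathcal{C}$ embeds in a bounded grid each of whose cells is filled by a monotone (or empty) subpermutation. I would then show that avoiding the right-hand obstructions and their eight symmetries forces neighbouring monotone cells to interact in only finitely many ways: two cells that could be inflated independently to arbitrary length would reproduce one of the forbidden right-hand patterns. Recording, for each grid, the finite data of which cells are active and how they are linked yields a finite list $\tilde{G}$ of peg permutations with $\mathcal{C}=\Grid(\tilde{G})$, and this packaging is the content of Albert et al.\ Theorem~10.3.

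The cycle closes with $(2)\Rightarrow(3)$, which I would prove contrapositively: each family in Figure~\ref{fig-mono-grid-obstructions} generates a subclass whose counting function dominates $F_n$ for every $n$ (the left obstructions supply two independent monotone strands forcing exponential growth, the right obstructions supply at least Fibonacci-many arrangements), so if $\mathcal{C}$ contained arbitrarily long members of any such family we would have $|\mathcal{C}_n|\ge F_n$ for all $n$, contradicting $(2)$. The main obstacle is the second half of $(3)\Rightarrow(4)$: translating the combinatorial statement ``no long right-hand obstruction'' into the clean algebraic conclusion that finitely many peg permutations suffice. Pinning down the interaction between distinct monotone cells — ruling out any two that can be inflated independently without creating a forbidden pattern — is the delicate step, and it is exactly the phenomenon that the geometric grid class technology of Albert et al.\ is built to control.
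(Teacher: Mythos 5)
You should first note what you are up against: the paper does not prove this theorem at all --- it is quoted verbatim as the combination of \cite[Corollary 3.4]{huczynska:grid-classes-an:} and \cite[Theorem 10.3]{albert:geometric-grid-:} --- so the only internal material to compare with is Section 2, which in effect contains a constructive treatment of the implication $(4)\Rightarrow(1)$. Your global architecture (the cycle $(1)\Rightarrow(2)\Rightarrow(3)\Rightarrow(4)\Rightarrow(1)$, with the structural step delegated to the two citations and the Fibonacci Dichotomy of Kaiser and Klazar~\cite{kaiser:on-growth-rates:} invoked for $(2)\Rightarrow(1)$) matches how the literature actually assembles the result, and your $(1)\Rightarrow(2)$ is fine as stated.

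The genuine gap is in your $(4)\Rightarrow(1)$. The phrase ``inclusion--exclusion over coarser/finer grid structures, each of which is again a finite union of peg-permutation grid classes'' presupposes that intersections and differences of classes of the form $\Grid(\tilde{\rho})$ are again finite unions of such classes --- but that is essentially the structure theorem (4) you are in the middle of proving, so as written the step is circular, and making the multiplicity bookkeeping honest is exactly what occupies the paper's entire Section 2 (completing, compacting, and cleaning $\tilde{G}$ so that $\Grid(\tilde{G})$ becomes a \emph{disjoint} union of cross-sections $\Grid(\tilde{\rho},\V_{\tilde{\rho}})$, with inclusion--exclusion only over the finite basis of each vector class). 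Since you invoke Kaiser--Klazar anyway, there is a much cheaper repair: prove $(4)\Rightarrow(2)$ instead. A permutation of length $n$ in $\Grid(\tilde{\rho})$, $\tilde{\rho}$ of length $m$, is the inflation $\tilde{\rho}[\vec{v}]$ for some vector with $\|\vec{v}\|=n$, so $|\Grid(\tilde{G})_n|$ is bounded by a sum of binomial coefficients $\binom{n+m-1}{m-1}$, a polynomial, which is eventually smaller than $F_n$; then $(2)\Rightarrow(1)$ closes the cycle with no overcounting analysis at all. Separately, in $(2)\Rightarrow(3)$ your parenthetical justifications are swapped: the \emph{left} obstructions of Figure~\ref{fig-mono-grid-obstructions} are the staircases of $21$s (resp.\ $12$s), whose downsets are the direct sums of $1$s and $21$s and are counted \emph{exactly} by the Fibonacci numbers, while the \emph{right} obstructions are the parallel alternations containing two independent monotone strands, with strictly faster growth. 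The conclusion you need --- that each family's downset has at least $F_n$ members at every length $n$ --- is true, but it requires this exact count for the staircase families and is itself part of what \cite[Corollary 3.4]{huczynska:grid-classes-an:} establishes, so it should be cited rather than waved at.
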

\begin{proofsketch}
Huczynska and Vatter~\cite[Corollary 3.4]{huczynska:grid-classes-an:} prove that (1) and (2) are equivalent. They further prove \cite[Theorem 2.5 and Proposition 3.3]{huczynska:grid-classes-an:} that (3) implies that $\C\subseteq\Grid(\tilde{\rho})$ for some peg permutation $\tilde{\rho}$ (although in \cite{huczynska:grid-classes-an:} this is stated in an equivalent manner in terms of grid classes of matchings). This condition implies (1) by \cite[Theorem 2.9]{huczynska:grid-classes-an:}, and the converse---that (1) implies (3)---follows by an elementary counting argument.

It remains only to establish that (1), (2), and (3) are equivalent to (4). It follows readily that (4) implies (3). Finally, the work of Albert, Atkinson, Bouvel, Ru\v{s}kuc, and Vatter~\cite{albert:geometric-grid-:}, specifically Theorem 10.3, on ``atomic'' geometric grid classes shows that if $\C\subseteq\Grid(\tilde{\rho})$ for some peg permutation $\tilde{\rho}$ (as implied by any of (1), (2), or (3) by the above) then $\C=\Grid(\tilde{G})$ for a finite set $\tilde{G}$ of peg permutations.
\end{proofsketch}

Because we are only inflating peg permutations by monotone intervals, we can specify these intervals by vectors of positive integers rather than permutations. For example, using this notation we can write
\[
	6\ 321\ 7\ 45 = \d{3}\m{1}\d{4}\p{2}[\vect{1,3,1,2}].
\]

We denote the non-negative integers by $\mathbb{N}$ and the positive integers by $\mathbb{P}$. Thus $\mathbb{N}^m$ (resp., $\mathbb{P}^m$) denotes vectors of length $m$ with entries from $\mathbb{N}$ (resp., $\mathbb{P}$). For a vector $\vec{v}$ in one of these sets we write $\|\vec{v}\|=\sum \vec{v}(i)$ and refer to this quantity as the \emph{weight} of $\vec{v}$.

Let $\tilde{\rho}$ be a peg permutation of length $m$ and $\vec{v}\in\mathbb{N}^m$. If $\tilde{\rho}(i)$ is dotted, we must have $\vec{v}(i)\le 1$. Thus if $\tilde{\rho}$ is of length $m$, we can write
\[
	\Grid(\tilde{\rho})
	=
	\{\tilde{\rho}[\vec{v}] \st \vec{v}\in\mathbb{N}^m \mbox{ which satisfy } \vec{v}(i)\le 1 \mbox{ for all $i$ such that $\tilde{\rho}(i)$ is dotted}\}.
\]
Indeed, we impose a stronger constraint herein. In our theorem and algorithm, we insist on inflating $\tilde{\rho}$ by vectors which \emph{fill} them; this means that each component of the vector equals $1$ if it corresponds to a dotted entry of $\tilde{\rho}$ and is otherwise at least $2$. Given a set $\mathcal{V}\subseteq\mathbb{P}^m$ of vectors which fill $\tilde{\rho}$ we define
\[
	\tilde{\rho}[\mathcal{V}]
	=
	\{\tilde{\rho}[\vec{v}] \st \vec{v}\in\mathcal{V}\}.
\]

We also extend the notion of containment and avoidance to vectors. Given the vectors $\vec{v}$ and $\vec{w}$ in $\mathbb{N}^m$ or $\mathbb{P}^m$, we say that $\vec{v}$ is \emph{contained} in $\vec{w}$ if $\vec{v}(i)\le\vec{w}(i)$ for all indices $i$ (and write $\vec{v}\le\vec{w}$ in this case). We further say that $\vec{w}$ \emph{avoids} $\vec{v}$ if $\vec{v}$ is not contained in $\vec{w}$. The containment relation on $\mathbb{N}^m$ (and thus also on $\mathbb{P}^m$) is clearly a partial order and is compatible with permutation containment in the sense that if $\vec{v}\le\vec{w}$ then $\tilde{\rho}[\vec{v}]\le\tilde{\rho}[\vec{w}]$, assuming both inflations are defined. 

Because our order on vectors is a partial order we may define \emph{downsets} (sets closed downward under containment) and \emph{upsets} of vectors. The intersection of a downset and an upset is referred to as a \emph{convex set}. As with permutation classes, we can specify a downset of vectors by its basis, which consists of the minimal permutations not in the downset. Unlike the permutation class context, however, for vectors we are guaranteed by Higman's Theorem~\cite{higman:ordering-by-div:} that bases of downsets are finite. As every convex set is simply the set difference of two downsets, this implies that all convex sets of vectors can be specified by a finite amount of information. The fundamental objects in our algorithm are ordered pairs of the form $(\tilde{\rho}, \mathcal{V}_{\tilde{\rho}})$ where $\tilde{\rho}$ is a peg permutation of length $m$ and $\mathcal{V}_{\tilde{\rho}}$ is a convex set in $\mathbb{P}^m$.

Note that the set of vectors which fill a given peg permutation $\tilde{\rho}$ forms a convex set. The downset component of this convex set consists of those vectors which do not contain an entry larger than $1$ corresponding to a dotted entry of $\tilde{\rho}$. The upset component consists of those vectors which contain the \emph{minimal filling vector} of $\tilde{\rho}$, which is the vector $\vec{m}$ defined by $\vec{m}(i)=1$ if $\tilde{\rho}(i)$ is dotted and $\vec{m}(i)=2$ if $\tilde{\rho}(i)$ is signed (in fact, only \emph{principal} upsets of the form $\{\vec{v}\st \vec{v}\ge\vec{m}\}$ arise in our work).

We now have all the terminology and notation to state our structure theorem.

\begin{theorem}
\label{thm-polynomial-main}
For every polynomial permutation class $\C$ there is a finite set $\tilde{H}$ of peg permutations, each associated with its own convex set $\mathcal{V}_{\tilde{\rho}}$ of filling vectors, such that $\C$ can be written as the disjoint union
\[
	\C=\biguplus_{\tilde{\rho}\in\tilde{H}} \tilde{\rho}[\mathcal{V}_{\tilde{\rho}}].
\]	
\end{theorem}

We prove Theorem~\ref{thm-polynomial-main} in the next section by giving an algorithm to compute the set $\tilde{G}$ and the associated convex sets $\mathcal{V}_{\tilde{\rho}}$ for each $\tilde{\rho}\in\tilde{G}$. Once these objects are computed, the enumeration of the class is reduced to the enumeration of a finite number of convex sets of vectors, which is straight-forward. 
Further, this disjoint union allows efficient algorithms for both the generation of permutations in the class and for testing class membership.
In Section~\ref{sec-genome-rearrangement} we apply our approach to the study of genome rearrangement.

Before that, we should mention that there are several established approaches which could, \emph{theoretically}, be used to enumerate polynomial classes, but they each have drawbacks. 
\begin{itemize}
\item Polynomial classes are contained in geometric grid classes (see Theorem~\ref{thm-poly-tfae}), so they fall under the purview of the results of Albert, Atkinson, Bouvel, Ru\v{s}kuc, and Vatter~\cite{albert:geometric-grid-:}. However, the proofs of these results are nonconstructive. Indeed, our work can be viewed as illuminating some preliminary obstacles which an algorithmic approach to geometric grid classes would have to overcome.
\item Polynomial classes can be shown to contain only finitely many ``simple permutations'' (this follows from Theorem~\ref{thm-poly-tfae}), so the methods of Albert and Atkinson~\cite{albert:simple-permutat:} (or the refinements introduced by Brignall, Huczynska, and Vatter~\cite{brignall:simple-permutat:alg:}) could be used to compute their generating functions. While some steps toward implementing this approach have been taken by Bassino, Bouvel, Pierrot, Pivoteau, and Rossin~\cite{bassino:combinatorial-s:}, applying it would require us to first determine the basis of the class in question.
\item Polynomial classes can be enumerated using the insertion encoding of Albert, Linton, and Ru\v{s}kuc~\cite{albert:the-insertion-e:} (which is implemented in the Maple package {\sc InsEnc} described in Vatter~\cite{vatter:finding-regular:}). However, this method also requires determining the basis of the class.
\end{itemize}

\section{The Algorithm}

Presented with a set $\tilde{G}$ of peg permutations, the algorithm we describe outputs a set of peg permutations and corresponding convex set of integer vectors as specified in the statement of Theorem~\ref{thm-polynomial-main}. This set is then used to compute the generating function for the class $\Grid(\tilde{G})$ and determine the enumerating polynomial. This process is divided into five steps, each described in its own subsection. (Note that in the implementation, steps 3 and 4 are combined.)

\begin{enumerate}[1.]
\item The input set is first \emph{completed} by adding new peg permutations which are contained in the given set in a certain sense. 
\item The resulting set is then \emph{compacted}, by removing extraneous peg permutations. 
\item The set of peg permutations is then \emph{cleaned} and transformed into a set of pairs of peg permutations and convex sets of vectors which describe restrictions on their fillings. 
\item The resulting set is \emph{combined}, which expresses the polynomial class as a disjoint union of inflations of peg permutations by convex sets of vectors, as promised by Theorem~\ref{thm-polynomial-main}.
\item Finally, with this preprocessing accomplished, the generating function (and the polynomial) enumerating the class can be easily computed.
\end{enumerate}

Alongside our description of these steps we consider the example of enumerating $\Grid(\m 1\p 2)$. While this class is trivial to enumerate using more traditional methods, it serves to illustrate the various steps of the algorithm.

We need a few prerequisites before the algorithm can be described. First we define a partial order on peg permutations. Given peg permutations $\tilde{\tau}$ and $\tilde{\rho}$ of lengths $k$ and $n$, respectively, $\tilde{\tau}\le\tilde{\rho}$ if there are indices $1\le i_1<i_2<\cdots<i_k\le n$ such that $\rho(i_1)\rho(i_2)\cdots\rho(i_k)$ is order isomorphic to $\tau$ and for each $j$, $\tilde{\tau}(j)$ is decorated with a
\[
\left\{
\begin{array}{cl}
\text{$+$ or $\bullet$}&\text{if $\tilde{\rho}(i_j)$ is decorated with a $+$,}\\
\text{$-$ or $\bullet$}&\text{if $\tilde{\rho}(i_j)$ is decorated with a $-$, 
or}\\
\text{$\bullet$}&\text{if $\tilde{\rho}(i_j)$ is dotted.}\\
\end{array}
\right.
\]
In other words, in order to obtain a smaller element in this \emph{peg permutation order}, one can change signs to dots and delete entries. Note that $\Grid(\tilde{\tau})\subseteq\Grid(\tilde{\rho})$ whenever $\tilde{\tau}\le\tilde{\rho}$, but the reverse implication does not hold in general; for example, $\Grid(\d{1}\d{2})\subseteq\Grid(\p{1})$, but $\d{1}\d{2} \not\le \p{1}$.

In addition, we extend the notion of intervals to peg permutations in the trivial way, by ignoring decoration; thus the intervals of $\tilde{\rho}$ are the same as the intervals of $\rho$, although they carry their decoration from $\tilde{\rho}$. We must also say something about monotone intervals of (unpegged) permutations:

\begin{proposition}\label{prop-mono-intervals-intersection}
If two monotone intervals of a permutation intersect then their union is also a monotone interval.
\end{proposition}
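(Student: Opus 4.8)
The plan is to first reduce the statement to a claim about a single ``direction'' shared by the two intervals, and then to handle the gluing. Write the two monotone intervals as sets of (contiguous) positions $I=[a,b]$ and $J=[c,d]$. Since they intersect, $I\cup J$ is again a set of contiguous positions, so the only real content is that $\pi$ restricted to $I\cup J$ is monotone (the values then automatically form an interval, being consecutive). The useful observation is that an \emph{increasing} interval is precisely a block of positions on which consecutive entries differ by exactly $+1$, and a \emph{decreasing} interval one on which they differ by exactly $-1$: strict monotonicity together with the requirement that the values form a contiguous block of integers forces each step to be $\pm1$.

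By symmetry I may assume $a\le c$, and I may discard the trivial cases in which one interval contains the other (the union is then that interval). Thus $a\le c\le b<d$, and the overlap is the block $[c,b]$. Because $c\le b$, every pair of adjacent positions $\{i,i+1\}$ in $I\cup J$ lies entirely within $I$ or entirely within $J$; hence, by the observation above, each such pair satisfies $\pi(i+1)-\pi(i)=\pm1$. It remains only to show that a single sign occurs throughout, i.e.\ that $I$ and $J$ are monotone in the same direction.

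The main obstacle is exactly this reconciliation of directions, and it splits according to the size of the overlap. If $[c,b]$ contains at least two positions, then it contains an adjacent pair lying in both $I$ and $J$, and the sign of the step of $\pi$ on that pair is simultaneously the direction of $I$ and of $J$, so the directions agree. The delicate case is when the overlap is the single position $c$ (with $a<c<d$), where no shared step is available; here injectivity of $\pi$ is the crux. If $I$ were increasing and $J$ decreasing we would obtain $\pi(c-1)=\pi(c)-1=\pi(c+1)$, while the opposite pairing gives $\pi(c-1)=\pi(c)+1=\pi(c+1)$; either way two distinct positions receive the same value, contradicting that $\pi$ is a permutation. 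Hence $I$ and $J$ share a direction, every adjacent step in $I\cup J$ carries the same sign $\pm1$, and $I\cup J$ is therefore a monotone interval.
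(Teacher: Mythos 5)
Your proof is correct and takes essentially the same route as the paper's, which dispatches the statement with an informal case check (``by considering the various cases, it is clear''). You simply make explicit what the paper leaves implicit: the characterization of a monotone interval as a block whose consecutive steps are all $+1$ or all $-1$, and the injectivity argument needed in the only delicate case, where the two intervals overlap in a single position.
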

\begin{proof}
Suppose that two monotone intervals of the permutation $\pi$ intersect. The claim is obvious if both are increasing, both are decreasing, or either consists of a single entry. Thus we may assume that both intervals have at least two entries, one is increasing, and the other is decreasing. In this case the decreasing interval must share either the first or last entry with the increasing interval, but both cases lead to contradictions, completing the proof.
\end{proof}

\subsection{Completion}

We say that the set $\tilde{G}$ of peg permutations is \emph{complete} if every $\pi\in\Grid(\tilde{G})$ fills some $\tilde{\rho}\in\tilde{G}$. It is not difficult to construct complete sets of peg permutations, as we observe below.

\begin{proposition}
\label{prop-CTI-complete}
Every downset in the peg permutation order is complete.
\end{proposition}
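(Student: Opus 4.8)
The plan is to start from an arbitrary $\pi\in\Grid(\tilde{G})$ and explicitly produce a peg permutation lying below some member of $\tilde{G}$ that $\pi$ fills; since $\tilde{G}$ is a downset, this witness will automatically lie in $\tilde{G}$. First I would invoke the definition of $\Grid(\tilde{G})$ to fix some $\tilde{\rho}\in\tilde{G}$ with $\pi\in\Grid(\tilde{\rho})$, together with the $\tilde{\rho}$-partition $P$ of $\pi$ recording how $\rho$ was inflated to produce $\pi$. The crucial observation is that, precisely because we allow inflations by the empty permutation, $P$ need not fill $\tilde{\rho}$: a signed entry of $\tilde{\rho}$ may have been inflated by the empty permutation or by a single entry, and a dotted entry may have been inflated by the empty permutation.

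The idea is then to \emph{tighten} $\tilde{\rho}$ so as to remove exactly this slack. Define $\tilde{\sigma}$ from $\tilde{\rho}$ by (i) deleting every entry whose corresponding part of $P$ is empty, and (ii) changing the decoration of every signed entry whose corresponding part is a single entry to a $\bullet$. I would first check that $\tilde{\sigma}\le\tilde{\rho}$ in the peg permutation order: the two operations just used are precisely deletion of entries and relaxation of signs to dots, which are exactly the moves permitted by that order, and the surviving entries of $\tilde{\rho}$ form a subsequence order isomorphic to the underlying permutation $\sigma$ of $\tilde{\sigma}$.

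Next I would argue that the restriction of $P$ to the surviving (nonempty) parts is a $\tilde{\sigma}$-partition of $\pi$ which fills $\tilde{\sigma}$. Each remaining signed entry of $\tilde{\sigma}$ inherited its sign from $\tilde{\rho}$ and its part was neither empty nor a singleton, hence contains at least two entries forming a monotone interval of the correct type; each dotted entry of $\tilde{\sigma}$ has a part consisting of a single entry (either an originally dotted entry whose surviving part is necessarily a singleton, or a signed entry demoted because its part was a singleton). Deleting the empty parts does not alter $\pi$ and leaves the surviving parts in the relative position dictated by $\sigma$, so the restricted partition is genuinely a $\tilde{\sigma}$-partition and it fills $\tilde{\sigma}$ by construction. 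Since $\tilde{G}$ is a downset and $\tilde{\rho}\in\tilde{G}$, we conclude $\tilde{\sigma}\in\tilde{G}$, so $\pi$ fills a member of $\tilde{G}$; as $\pi$ was arbitrary, $\tilde{G}$ is complete.

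The main obstacle is purely bookkeeping: verifying that after the two tightening operations the leftover parts still assemble into a legitimate $\tilde{\sigma}$-partition — that is, that the order-isomorphism between the skeleton of the partition and $\sigma$ survives the deletions, and that relaxing a sign to a dot on a singleton part is consistent with the monotone-interval requirement. None of this is deep, but it is the only place where care is needed, since everything else is a direct translation of the definitions of the peg permutation order and of filling.
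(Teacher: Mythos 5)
Your proof is correct: the paper states this proposition explicitly \emph{without} proof (``It is not difficult to construct complete sets of peg permutations, as we observe below without proof''), and your argument supplies exactly the routine verification the authors omit. Your tightening construction --- delete the entries of $\tilde{\rho}$ inflated by the empty permutation, demote signed entries with singleton parts to dotted, observe that both moves are permitted in the peg permutation order so the resulting $\tilde{\sigma}$ lies in the downset, and check that the surviving parts form a $\tilde{\sigma}$-partition filling $\tilde{\sigma}$ --- is precisely the intended argument, with the bookkeeping (including the observation that a surviving originally-dotted part is necessarily a singleton) handled correctly.
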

\begin{proof}
Suppose that $\pi\in\Grid(\tilde{G})$ for a downset $\tilde{G}$ of peg permutations. Therefore $\pi\in\Grid(\tilde{\rho})$ for some peg permutation $\tilde{\rho}\in\tilde{G}$, i.e., $\pi=\tilde{\rho}[\vec{v}]$ for a vector $\vec{v}\in\mathbb{N}^m$, where $m$ denotes the length of $\tilde{\rho}$. Now form the permutation $\tilde{\tau}$ by deleting the entries $\tilde{\rho}(i)$ for which $\vec{v}(i)=0$ and dotting the entries $\tilde{\rho}(i)$ for which $\vec{v}(i)=1$. Clearly $\tilde{\tau}\le\tilde{\rho}$, so $\tilde{\tau}\in\tilde{G}$ because it is a downset, and $\pi$ fills $\tilde{\tau}$, as desired.
\end{proof}

Let $\tilde{G}$ be an input set of peg permutations. The completion step consists of replacing $\tilde{G}$ by its downward closure, say $\tilde{H}$, which can be obtained by deleting entries and changing signs to dots for each of the peg permutations within $\tilde{G}$. Having done this, $\tilde{H}$ is a complete set of peg permutations, and so every permutation in the class $\Grid(\tilde{H})$ fills some member of $\tilde{H}$.

In our example $\tilde{G} = \{\m 1 \p 2\}$, so the first step in the algorithm is to replace it with its downward closure under the peg permutation order,
$$
\{\d 1, \d 1 \d 2, \p 1, \m 1, \d 1 \p 2, \m 1 \d 2, \m 1 \p 2\}.
$$
At this stage each peg permutation is associated to the convex set of vectors which fill it.

\subsection{Compacting}

Our next step is more technical, and requires additional definitions. Roughly, this step removes those elements of $\tilde{G}$ which define a grid class also defined by another member of $\tilde{G}$. 

Proposition~\ref{prop-mono-intervals-intersection} shows that every permutation $\pi$ has a unique coarsest partition into monotone intervals. In other words, for each $\pi$ there is a unique peg permutation $\tilde{\rho}$ such that $\pi$ is $\tilde{\rho}$-griddable, but not $\tilde{\tau}$-griddable for any $\tilde{\tau}<\tilde{\rho}$ (here $<$ denotes the peg permutation order). In particular, this implies that, for this $\tilde{\rho}$, $\Grid(\tilde{\rho})$ properly contains $\Grid(\tilde{\tau})$ for all peg permutations $\tilde{\tau} < \tilde{\rho}$. We call a peg permutation $\tilde{\rho}$ with this property \emph{compact}, i.e., $\tilde{\rho}$ is compact if $\Grid(\tilde{\tau})\subsetneq\Grid(\tilde{\rho})$ for all $\tilde{\tau}<\tilde{\rho}$. For example, $\d{2}\m{1}$ is not compact because $\m{1}<\m{2}\d{1}$ and $\Grid(\m{2}\d{1})=\Grid(\m{1})$, but both $\d1\d2$ and $\p1$ are compact.

Our next result ties the definitions of compactness and filling together.

\begin{proposition}
\label{prop-tfae-compact-filling}
For a peg permutation $\tilde{\rho}$, the following conditions are equivalent:
\begin{enumerate}
\item[(1)] $\tilde{\rho}$ is compact,
\item[(2)] $\tilde{\rho}$ does not have an interval order isomorphic to  
$\p{1}\p{2}$, $\p{1}\d{2}$, $\d{1}\p{2}$, or symmetrically, to $\m{2}\m{1}$, 
$\m{2}\d{1}$, $\d{2}\m{1}$, and
\item[(3)] every permutation which fills $\tilde{\rho}$ has a unique 
$\tilde{\rho}$-partition.
\end{enumerate}
\end{proposition}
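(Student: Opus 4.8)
The plan is to establish the two equivalences $(1)\Leftrightarrow(2)$ and $(2)\Leftrightarrow(3)$, organising everything around one local observation: a two-element peg permutation (an interval of length two) fails to be compact exactly when it is one of the six listed patterns. A direct check of the eighteen cases — ascent or descent, against each of the nine decoration pairs — shows that the two entries fuse into a single monotone run containing a signed entry precisely for $\p{1}\p{2}$, $\p{1}\d{2}$, $\d{1}\p{2}$ and their descent mirrors, and that for each of these $\Grid(\tilde{\beta})$ equals $\Grid(\p{1})$ or $\Grid(\m{1})$. The only delicate cases are $\d{1}\d{2}$ and $\d{2}\d{1}$: their two dots do fuse geometrically, but into a run of fixed length two, so $\Grid(\tilde{\beta})$ is not the grid class of any single entry (a dot being too small and a sign too large); these stay compact and are correctly absent from the list. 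Thus condition~(2) says exactly that no two-element interval of $\tilde{\rho}$ is itself non-compact.

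I would prove $(1)\Rightarrow(2)$ contrapositively, as the easy ``local implies global'' step. If $\tilde{\rho}$ has a forbidden interval $\tilde{\beta}$ occupying an interval $I$ of positions, delete the appropriate one of its two entries to leave a single signed entry $\tilde{\beta}'$ with $\Grid(\tilde{\beta}')=\Grid(\tilde{\beta})$; this yields $\tilde{\tau}<\tilde{\rho}$. Since an inflation restricted to the block $I$ ranges over $\Grid(\tilde{\beta})$ independently of the surrounding entries, the grid class depends on that block only through $\Grid(\tilde{\beta})$, so $\Grid(\tilde{\tau})=\Grid(\tilde{\rho})$ and $\tilde{\rho}$ is not compact.

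The converse $(2)\Rightarrow(1)$ — ``all two-element intervals compact implies $\tilde{\rho}$ compact'' — is where the real work lies and is the step I expect to be hardest. I would argue contrapositively: if $\tilde{\rho}$ is not compact, first pass to a peg permutation $\tilde{\tau}$ covered by $\tilde{\rho}$ with $\Grid(\tilde{\tau})=\Grid(\tilde{\rho})$, which is legitimate because $\Grid$ is monotone in the peg order, so equality at the bottom of a saturated chain forces it all the way up. Such a cover is a single deletion or a single sign-to-dot change at one position $i$. In each case I would test the supposed equality against an explicit member of $\Grid(\tilde{\rho})$ that stresses position $i$ — the minimum filling for a deletion, and the minimum filling with part $i$ enlarged to a two-point run for a sign change. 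Membership of this test permutation in $\Grid(\tilde{\tau})$ forces the point(s) at position $i$ to be absorbed into a neighbouring part, and such absorption means precisely that $i$ and that neighbour span a fused monotone run with a signed entry, i.e. a forbidden interval. The delicate point, and the reason the $\d{1}\d{2}$ cases must be treated carefully, is excluding some other, non-local way of re-partitioning the test permutation to the shape of $\tilde{\tau}$; here I would use the rigidity of the coarsest monotone-interval partition furnished by Proposition~\ref{prop-mono-intervals-intersection} to confine the analysis to the immediate neighbours of $i$.

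Finally I would treat $(2)\Leftrightarrow(3)$ through the partition picture. For $\lnot(2)\Rightarrow\lnot(3)$, given a forbidden interval I fill the remainder of $\tilde{\rho}$ minimally and fill the interval so that its fused run is long enough to be cut legitimately in two ways — sliding the interior boundary, or leaving the dotted part empty and absorbing its point into the signed run — producing two distinct $\tilde{\rho}$-partitions of a single filling permutation. For $(2)\Rightarrow(3)$, let $(\mu,P)$ be a filling and $P'$ any $\tilde{\rho}$-partition of $\mu$; examining the first position at which $P$ and $P'$ disagree, any discrepancy would place two adjacent parts inside a common monotone run containing a signed entry, again a forbidden interval, so $P'=P$. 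Proposition~\ref{prop-mono-intervals-intersection} is once more the tool controlling how the monotone runs witnessing $P$ and $P'$ may overlap.
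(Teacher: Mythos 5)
Your proposal is correct in substance and proves the same three equivalences, but it organizes the hard direction $(2)\Rightarrow(1)$ quite differently from the paper. The paper's proof is a single global argument: assuming $\Grid(\tilde{\tau})=\Grid(\tilde{\rho})$ for \emph{any} $\tilde{\tau}<\tilde{\rho}$, it takes any permutation $\pi$ filling $\tilde{\rho}$, compares its $\tilde{\rho}$-partition $P$ with its $\tilde{\tau}$-partition $P'$, notes (by a pigeonhole on part counts/sizes, since $\tilde{\tau}<\tilde{\rho}$ and $\pi$ fills $\tilde{\rho}$) that some part of $P'$ must straddle two parts of $P$, and invokes Proposition~\ref{prop-mono-intervals-intersection} to fuse those two parts into a monotone interval, yielding a forbidden pattern; the same fusion argument is then reused verbatim for $(2)\Leftrightarrow(3)$. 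You instead first reduce to a \emph{cover} $\tilde{\tau}\lessdot\tilde{\rho}$ via the saturated-chain/monotonicity argument (which is legitimate) and then case-split on whether the cover is a deletion or a sign-to-dot change, testing against explicit fillings. Your route is more granular and more directly checkable --- it is closer to something one could implement --- at the cost of a case analysis the paper avoids entirely; the paper's route is shorter and uniform across both directions, at the cost of terseness. One point worth noting: both arguments share the same delicate spot, namely that a fusion of two \emph{dotted} parts yields only a $\d{1}\d{2}$ or $\d{2}\d{1}$ interval, which is \emph{not} on the forbidden list, so one must rule out that the straddling part of $P'$ swallows exactly two dots. The paper glosses this silently; you at least flag it explicitly and propose to handle it with the rigidity of the coarsest monotone partition, though a complete fix really wants a quantitative choice of test permutation (e.g., inflating every signed entry of $\tilde{\rho}$ to a run of length exceeding the length of $\tilde{\tau}$, so that each long run must be covered by a distinct signed part of $P'$, forcing any straddling signed part to involve a signed entry of $\tilde{\rho}$). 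With that one refinement made explicit, your proof is complete and, if anything, more careful than the paper's.
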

\begin{proof}
It is clear that (1) implies (2), so our first task is to show that (2) implies (3). Suppose that $\tilde{\rho}$ satisfies the conditions of (2) but that there is a permutation $\pi$ which fills $\tilde{\rho}$ and has two different $\tilde{\rho}$-partitions. Equivalently, this means that $\pi=\tilde{\rho}[\vec{v}]$ for a vector $\vec{v}$ of positive integers (the filling partition) and that there is a different vector, $\vec{w}$ of nonnegative integers also with $\pi=\tilde{\rho}[\vec{w}]$.

Let $j$ denote the first index such that $\vec{v}(j)\neq\vec{w}(j)$. There must be at least one more entry where $\vec{v}$ and $\vec{w}$ differ, so let $k>j$ denote the first index after $j$ such that $\vec{v}(k)\neq\vec{w}(k)$. Because $\tilde{\rho}[\vec{v}]$ and $\tilde{\rho}[\vec{w}]$ yield the same permutation despite the fact that $\tilde{\rho}(j)$ and $\tilde{\rho}(k)$ are inflated by monotone intervals of different lengths, these two entries must lie in a common monotone interval of $\tilde{\rho}$. Moreover, on of the entries of this interval of $\tilde{\rho}$ must be signed, because $\tilde{\rho}[\vec{v}]$ is a filling partition for $\pi$. However, this implies that $\tilde{\rho}$ contains one of the intervals listed in (2), a contradiction.

It remains to show that (3) implies (1). Let $\vec{m}$ denote the minimal filling vector of $\tilde{\rho}$ (defined by $\vec{m}(i)=1$ if $\tilde{\rho}(i)$ is dotted and $\vec{m}(i)=2$ otherwise). By the hypotheses of (3), $\pi=\tilde{\rho}[\vec{m}]$ has a unique $\tilde{\rho}$-partition. This shows that $\pi$ is not contained in $\Grid(\tilde{\tau})$ for any $\tilde{\tau}<\tilde{\rho}$, so $\tilde{\rho}$ is compact, completing the proof.
\end{proof}

We say that the set $\tilde{G}$ of peg permutations is \emph{compact} if every peg permutation it contains is compact. Note that if $\tilde{G}$ is a downset and $\tilde{\rho} \in \tilde{G}$ is not compact then there is a $\tilde{\tau} \in \tilde{G}$ such that $\Grid(\tilde{\tau}) = \Grid(\tilde{\rho})$. This implies the following result.

\begin{proposition}\label{prop-compact-complete}
Let $\tilde{G}$ be a downset of peg permutations and $\tilde{H}$ the result of removing all non-compact peg permutations from $\tilde{G}$. Then $\Grid(\tilde{G}) = \Grid(\tilde{H})$. 
\end{proposition}

Proposition~\ref{prop-tfae-compact-filling} provides a simple method for identifying non-compact elements of $\tilde{G}$. During this step of the algorithm, we simply inspect each element of $\tilde{G}$ and remove it if it contains an interval isomorphic to one of those listed in Proposition~\ref{prop-tfae-compact-filling}, leaving a compact (but still complete) set which we denote by $\tilde{H}$. 

Our running example contains only a single non-compact peg permutation after the completion step, which we remove in the compacting step from $\tilde{G}$:
$$
\tilde{H} = \{\d 1, \d 1 \d 2, \p 1, \m 1, \cancel{\d 1 \p 2}, 
              \m 1 \d 2, \m 1 \p 2\}.
$$
The associated convex sets do not change in this step and so each peg permutation in $\tilde{G}$ is still associated to the convex set of vectors which fill it.

\subsection{Cleaning}

The compactification step removes some redundancies, but there is still a subtle problem to be addressed. In fact, this problem cannot be resolved at the level of peg permutations and requires us to restrict the associated convex sets for the first time. We refer to this step as \emph{cleaning} the set of peg permutations. 

For $\tilde{\rho}\in\tilde{G}$, if $\pi$ fills $\tilde{\rho}$ then $\pi$ has a unique $\tilde{\rho}$-partition, but this does not necessarily imply that $\pi$ doesn't fill some other $\tilde{\tau}\in\tilde{G}$. For example, $2341$ fills both $\d{2}\d{3}\d{4}\d{1}$ and $\p{2}\d{1}$. To address this problem, we say that a compact peg permutation $\tilde{\rho}$ is \emph{clean} if $\Grid(\tilde{\rho})\not\subseteq\Grid(\tilde{\tau})$ for any \emph{shorter} peg permutation $\tilde{\tau}$. We say that the set $\tilde{G}$ of peg permutations is \emph{clean} if each of them is clean.

\begin{proposition}
\label{prop-clean-iff}
The compact peg permutation $\tilde{\rho}$ is clean if and only if it does not have an interval order isomorphic to $\d{1}\d{2}$ or $\d{2}\d{1}$.
\end{proposition}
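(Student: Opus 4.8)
The plan is to prove the two implications separately, disposing of the ``only if'' direction by contraposition and concentrating the real work on the ``if'' direction.

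For the forward implication, suppose $\tilde{\rho}$ has an interval order isomorphic to $\d{1}\d{2}$ (the $\d{2}\d{1}$ case being symmetric). I would form a shorter peg permutation $\tilde{\tau}$ by merging these two adjacent, consecutive-valued dotted entries into a single entry decorated with a $+$. In any $\tilde{\rho}$-partition each of the two dotted parts contributes the empty permutation or a single entry, so their union is an increasing interval of length at most two, which the merged $+$ entry of $\tilde{\tau}$ can absorb (recall that inflations by the empty permutation and by single entries are permitted). This gives $\Grid(\tilde{\rho})\subseteq\Grid(\tilde{\tau})$ with $\tilde{\tau}$ strictly shorter, so $\tilde{\rho}$ is not clean.

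For the converse I would first combine compactness with the hypothesis: by Proposition~\ref{prop-tfae-compact-filling}(2) a compact $\tilde{\rho}$ avoids the six intervals $\p{1}\p{2},\p{1}\d{2},\d{1}\p{2}$ and $\m{2}\m{1},\m{2}\d{1},\d{2}\m{1}$, and adding the assumed absence of $\d{1}\d{2}$ and $\d{2}\d{1}$ means $\tilde{\rho}$ has \emph{no} length-two monotone interval whose two entries are both compatible with its direction (both in $\{+,\bullet\}$ for an increasing pair, both in $\{-,\bullet\}$ for a decreasing pair). I would then take the minimum filling $\pi$ of $\tilde{\rho}$, whose blocks $B_1,\dots,B_n$ are the inflations of the entries of $\tilde{\rho}$ (length two for signed entries, length one for dotted ones), and aim to show that the coarsest monotone interval partition of $\pi$ furnished by Proposition~\ref{prop-mono-intervals-intersection} is precisely this $n$-block partition. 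The key observation is that a monotone interval, having contiguous values arranged monotonically, has consecutive entries differing by exactly $\pm 1$ in value. Using this, if a monotone interval $I$ crossed the boundary between $B_i$ and $B_{i+1}$, then being an interval of contiguous positions it would contain the last entry $x$ of $B_i$ and the first entry $y$ of $B_{i+1}$; taking $I$ increasing, $x$ and $y$ would have values $a$ and $a+1$, forcing the blocks to be value-consecutive with $x$ the maximum of $B_i$ and $y$ the minimum of $B_{i+1}$. For a length-two signed block this pins the decoration to $+$ (a decreasing block carries its maximum at its first, not its last, entry), while a dotted block is a single entry, hence $\bullet$; thus $\tilde{\rho}$ would contain an interval among $\p{1}\p{2},\p{1}\d{2},\d{1}\p{2},\d{1}\d{2}$, and the decreasing case of $I$ symmetrically produces $\m{2}\m{1},\m{2}\d{1},\d{2}\m{1},\d{2}\d{1}$. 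Either way we contradict the conclusion of the previous sentence, so no such $I$ exists.

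Since every monotone interval of $\pi$ then lies inside a single block, the coarsest partition cannot merge blocks, and as each block is itself a monotone interval the coarsest partition is exactly the $n$-block partition; consequently every partition of $\pi$ into monotone intervals has at least $n$ parts. If some strictly shorter $\tilde{\tau}$ satisfied $\Grid(\tilde{\rho})\subseteq\Grid(\tilde{\tau})$, then $\pi\in\Grid(\tilde{\tau})$ would yield a $\tilde{\tau}$-partition of $\pi$ into at most $|\tilde{\tau}|<n$ monotone intervals, a contradiction, so $\tilde{\rho}$ is clean. The step I expect to be the main obstacle is ruling out monotone intervals that cross a block boundary only partially or that span three or more blocks; the consecutive-value observation is what lets me handle all of these uniformly, so I would foreground it and let the boundary-crossing analysis reduce in every case to one of the eight forbidden length-two patterns.
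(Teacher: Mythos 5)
Your proof is correct and follows essentially the same route as the paper: contracting the dotted pair to a single signed entry for the forward direction, and, for the converse, taking a permutation filling $\tilde{\rho}$ and observing that a strictly shorter $\tilde{\tau}$ forces (by pigeonhole) a monotone interval straddling two blocks, which compactness then converts into a forbidden $\d{1}\d{2}$ or $\d{2}\d{1}$ interval. Your consecutive-values analysis of the boundary crossing merely makes explicit the step the paper dispatches by analogy with the proof of Proposition~\ref{prop-tfae-compact-filling}.
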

\begin{proof}
If $\tilde{\rho}$ contains an interval order isomorphic to $\d{1}\d{2}$ or $\d{2}\d{1}$ then let $\tilde{\tau}$ denote the peg permutation obtained by contracting this interval to a single entry decorated with the appropriate sign; clearly $\Grid(\tilde{\rho})\subseteq\Grid(\tilde{\tau})$.

Otherwise suppose that $\Grid(\tilde{\rho})\subseteq\Grid(\tilde{\tau})$ where $\tilde{\tau}$ is shorter than $\tilde{\rho}$ and let $\pi$ be any permutation which fills $\tilde{\rho}$. In any $\tilde{\tau}$-partition of $\pi$, since $\tilde{\tau}$ is shorter than $\tilde{\rho}$, there must be some interval which intersects at least two parts of a $\tilde{\rho}$-partition. Since this interval is monotone, this implies that the union of two intervals of $\tilde{\rho}$ is monotone. Because $\tilde{\rho}$ is compact, we see from our previous proposition that $\tilde{\rho}$ must contain a $\d{1}\d{2}$ or $\d{2}\d{1}$ interval, as desired.
\end{proof}

Given a complete and compact set $\tilde{G}$, it is not in general possible to obtain a clean subset $\tilde{H}\subseteq\tilde{G}$ with $\Grid(\tilde{H})=\Grid(\tilde{G})$; to return to our previous example, if $\d{2}\d{3}\d{4}\d{1}\in\tilde{G}$ but $\p{2}\d{1}\notin\tilde{G}$ then we cannot simply remove $\d{2}\d{3}\d{4}\d{1}$ from $\tilde{G}$ as we would lose permutations in doing so (and we cannot counteract this by adding $\p{2}\d{1}$ to $\tilde{G}$ as then we would gain permutations).

Instead, given a complete and compact set $\tilde{G}$ of peg permutations (with associated convex sets), the cleaning step restricts the convex sets. Specifically, this step performs the following operation.
\begin{itemize}
\item If $\tilde{\rho}$ is clean, leave it alone. 
\item Otherwise, locate the maximal intervals of the form $\d 1 \d 2 \dots \d k$ (resp., $\d k \dots \d 2 \d 1$) for $k\ge 2$ within $\tilde{\rho}$ and contract them to $\p 1$ (resp., $\m 1$). Call this new peg permutation $\tilde{\tau}$. To $\tilde{\tau}$ we associate the convex set of vectors which fill $\tilde{\tau}$ and for each entry $i$, if the $i$th entry of $\tilde{\tau}$ is the result of contracting $k$ dotted entries, then the vectors of $\mathcal{V}_{\tilde{\tau}}$ may not have their $i$th components greater than $k$.
\end{itemize}
In particular, note that the process of cleaning preserves the properties of completeness and compactness.

As an example of this process, the unclean peg permutation $\p 6 \d 3 \d 4 \d 5 \d 2 \d 1$ (associated to the convex set of vectors which fill it) becomes the peg permutation $\p 3 \p 2 \m 1$ associated to convex set of vectors which fill it and have their second component at most $3$ and their third component at most $2$. 

In our running example, the compacted set has only a single unclean peg permutation, $\d 1 \d 2$. Thus in the cleaning step we replace this peg permutation with the peg permutation $\p 1$ associated with the convex set of vectors $\{\vect{2}\}$. Note that our set already contained the peg permutation $\p 1$ associated with the convex set of vectors which fill it, $\{\vect{i}\st i\ge 2\}$. This overlap is handled in the next step of the algorithm.

\subsection{Combination}

The cleaning step may result in multiple convex sets associated to each peg permutation. This is fixed in the combination step, using properties of the poset $\mathbb{P}^m$. 

Note that $\mathbb{P}^m$ forms a lattice, with meet and join given, respectively, by component-wise minimum and maximum:
\begin{eqnarray*}
\vec{v}\wedge\vec{w}&=&(\min\{v(1),w(1)\},\dots,\min\{v(m),w(m)\}),\\
\vec{v}\vee\vec{w}&=&(\max\{v(1),w(1)\},\dots,\max\{v(m),w(m)\}).
\end{eqnarray*}
It follows that computing the union and intersection of arbitrary vector posets is relatively simple, as our next result shows.

\begin{proposition}
\label{prop-vector-union-intersection}
If $\V,\W\subseteq\mathbb{P}^m$ be downsets with bases $B_\V$ and $B_\W$, respectively, then $\V \cap \W$ and $\V \cup \W$ are also downsets. Further, the basis of $\V \cap \W$ is given by the minimal elements of the set $B_\V \cup B_\W$, and the basis of $\V \cup \W$ is given by the minimal elements of the set $ \{\vec{v}\vee\vec{w}\st\vec{v}\in B_\V\mbox{ and }\vec{w}\in B_\W\}$.
\end{proposition}
\begin{proof}
It is clear that the basis of $\V\cap\W$ is the set of minimal elements in the union of the two bases. Computing bases for unions is less transparent. If $B_\V$ and $B_\W$ are both singletons, consisting of $\vec{v}$ and $\vec{w}$, respectively, say, then the basis of $\V\cup\W$ is $\vec{v}\vee\vec{w}$. Therefore we see that for general bases,
\begin{eqnarray*}
\V\cup\W
&=&
\left(\bigcap_{\vec{v}\in B_\V} \{\mbox{$\vec{v}$-avoiding vectors}\}\right)
\bigcup
\left(\bigcap_{\vec{w}\in B_\W} \{\mbox{$\vec{w}$-avoiding vectors}\}\right),
\\
&=&
\bigcap_{\substack{\vec{v}\in B_\V,\\\vec{w}\in B_\W}}
\{\mbox{$\vec{v}$-avoiding vectors}\}\cup \{\mbox{$\vec{w}$-avoiding vectors}\},
\\
&=&
\bigcap_{\substack{\vec{v}\in B_\V,\\\vec{w}\in B_\W}}
\{\mbox{$\vec{v}\vee\vec{w}$-avoiding vectors}\},
\end{eqnarray*}
as claimed.
\end{proof}

In the combination step, for every peg permutation which has multiple convex sets associated to it we simply compute the union of these convex sets (using Proposition~\ref{prop-vector-union-intersection}) and update our list of pairs (of peg permutations and convex sets). At the conclusion of this step we have a set of peg permutations, each associated to a unique convex set. Moreover, we have established that every permutation $\pi\in\Grid({\tilde{G}})$ fills a unique clean and compact peg permutation $\tilde{\rho}$. Thus there is a unique vector $\vec{v}\in\mathcal{V}_{\tilde{\rho}}$ such that $\pi=\tilde{\rho}[\vec{v}]$. This proves Theorem~\ref{thm-polynomial-main}, which states that the permutation class $\Grid(\tilde{G})$ is indeed in bijection with the disjoint union
\[
	\biguplus_{\tilde{\rho}\in\tilde{G}} \tilde{\rho}[\mathcal{V}_{\tilde{\rho}}].
\]

At the end of this preprocessing, our running example contains five peg permutations, $\d 1$, $\p 1$, $\m 1$, $\m 1 \d 2$, and $\m 1 \p 2$ and these peg permutations are associated with the following convex sets
\begin{eqnarray*}
	\mathcal{V}_{\d 1}
	&=&
	\{\vect{1}\},\\
	\mathcal{V}_{\p 1}
	&=&
	\mathbb{P}\setminus\{\vect{1}\},\\
	\mathcal{V}_{\m 1}
	&=&
	\mathbb{P}\setminus\{\vect{1}\},\\
	\mathcal{V}_{\m 1 \d 2}
	&=&
	(\mathbb{P}\times\{1\})\setminus \{\vect{1,1}\},\\
	\mathcal{V}_{\m 1 \p 2}
	&=&
	\mathbb{P}^2\setminus \left(\mathbb{P}\times\{1\}\cup\{1\}\times\mathbb{P}\right).
\end{eqnarray*}

\subsection{Enumeration}

\SetAlFnt{\footnotesize\tt}
\begin{algorithm}
  \DontPrintSemicolon
  \SetAlgoLined
  \KwIn{Set $\tilde{G}$ of peg permutations}
  \KwOut{A set $\tilde{H}$ of peg permutations, each associated with a convex set $\V_{\tilde{\rho}}$ of vectors so that $\Grid(\tilde{G})$ is the disjoint union $\displaystyle\biguplus_{\tilde{\rho}\in \tilde{H}} \tilde{\rho}[\V_{\tilde{\rho}}]$.}

  \tcp{Complete $\tilde{G}$}
  \For{$\tilde{\rho} \ \in\tilde{G}$}{
    Add to $\tilde{G}$ all peg permutations which are contained in $\tilde{\rho}$ in the peg permutation order\;
  }

  \tcp{Compact $\tilde{G}$}
  \For{$\tilde{\rho} \ \in\tilde{G}$}{
    \If{\ttfamily $\tilde{\rho}$ contains intervals of the form
    $\p{1}\p{2}, \d{1}\p{2}, \p{1}\d{2}$, or their symmetries}{
      Remove $\tilde{\rho}$ from $\tilde{G}$\;
    }
  }

  \tcp{Clean and combine $\tilde{G}$}
  Initialize the set $\tilde{H}$, which will contain pairs $(\tilde{\rho}, 
  \V_{\tilde{\rho}})$ of peg permutations associated with convex sets of vectors\;
  \For{$\tilde{\rho} \  \in $ $\tilde{G}$}{
    \eIf{\ttfamily $\tilde{\rho}$ contains intervals of the form $\d{1}\d{2}$ or 
    $\d{2}\d{1}$}{
      Let $\tilde{\gamma}$ denote the cleaned $\tilde{\rho}$ and define $\V$ to be the set of integer vectors for which
      $\{\tilde{\gamma}[\vec{v}] : \vec{v} \in \V\} = \{\tilde{\rho}[\vec{v}] \st \vec{v} \mbox{ fills } \tilde{\rho}\}$\;
    }
    {
    	Let $\tilde{\gamma}=\tilde{\rho}$ and $\V=\{\vec{v}\st \vec{v}\mbox{ fills } \tilde{\rho}\}$\;
    }
    \eIf{
    	\ttfamily $(\tilde{\gamma}, \W) \in \tilde{H}$ for some $\W$}{Replace the element $(\tilde{\gamma},\W)$ with $(\tilde{\gamma}, \W \cup \V)$\;
    }
    {
      Add $(\tilde{\gamma}, \V)$ to $\tilde{H}$\;
    }
  }
  \Return $\tilde{H}$
\caption{Summary of the algorithm.}
\label{fig:algo}
\end{algorithm}

The first four steps of the algorithm are outlined in pseudocode in Figure~\ref{fig:algo}.  After these steps have been completed, we have a description of our class in the form specified by Theorem~\ref{thm-polynomial-main}. This form allows translation between permutations and vectors, enabling efficient enumeration and generation of elements in the class. The computation of the generating function enumerating the class is straightforward, as we show here. 

First, for any vector $\vec{w}\in\mathbb{P}^m$, the generating function (by weight) for vectors $\vec{v}\in\mathbb{P}^m$ which satisfy $\vec{v}\ge\vec{w}$ is
\[
	\frac{x^{\|\vec{w}\|}}{(1-x)^m}.
\]
Next, suppose we wish to enumerate a downset $\mathcal{V}$ of vectors of length $m$ with (finite) basis $B$. The Principle of Inclusion-Exclusion shows that the generating function (again by weight) for vectors in $\mathcal{V}$ is
\[
	\sum_{B \subseteq B_\V} (-1)^{|B|} \frac{x^{\| \bigvee B \|}}{(1 - x)^m}.
\]
Because the complement of an upset is a downset, we can use this formula to compute the generating function for any convex set of vectors, as promised. The generating functions for each of the convex sets in our running example are, respectively,
\[
	x, \frac{x^2}{1-x}, \frac{x^2}{1-x}, \frac{x^3}{1-x}, \frac{x^4}{(1-x)^2}.
\]
Summing these gives the generating function for the class,
\[
	\sum_{n \geq 1} | \Grid_n(\m 1 \p 2)| x^n
	=
	\frac{x}{(1-x)^2}
	=
	\sum_{n\ge 1} nx^n.
\]

The algorithm returns the generating function for the input class, and there are two conversions one might like to perform. First, a simple coefficient extraction gives the polynomial which enumerates the class (for sufficiently large values of $n$). Second, as this polynomial is integer-valued, it is well-known that it has integer coefficients when expressed in the binomial coefficient basis, and this expression can be computed using a change of basis matrix.

\section{Genome Rearrangement}
\label{sec-genome-rearrangement}

While the enumeration of permutation classes is an interesting problem in its own right, polynomial classes have found application to other fields. In this section we apply our algorithm to a problem from genetics, which presents an opportunity to demonstrate the utility of our algorithm by enabling computations which were previously infeasible. Polynomial permutation classes have applications to the field of evolutionary biology, as surveyed by Fertin, Labarre, Rusu, Tannier, and Vialette~\cite{fertin:combinatorics-o:}, who provide tables of data relating to the problem of evolutionary distance. The data presented in this section, computed using this algorithm, represents a significant extension of their computations.

The genes in a chromosomal genome may be thought of as discrete blocks of DNA, and thus labeled from $1$ to $n$ along the genome. During the process of evolution, the genes in a genome of one species might be rearranged via one or several operations and then appear in a different order (given by a permutation $\pi$) in the genome of a different species. By studying the number of operations required to transform the identity permutation into $\pi$ we may therefore get an estimate of how many mutations occurred in the evolution of the second species from the first. There are several different operations of interest, which we briefly survey in what follows.

In all of these operations, the class of permutations which can be obtained in at most $k$ operations away from the identity is a polynomial permutation class, and its structural description, as $\Grid(\tilde{G})$ for a set $\tilde{G}$ of peg permutations, is routine to compute. Thus using the Python package which implements the approach described in the previous section, we are able to automatically compute the polynomials enumerating these classes. The majority of these enumerations were not previously in the \OEISref. The new sequences are those numbered \OEISlink{A228392}--\OEISlink{A228401} and \OEISlink{A256181}. As observed by Kaiser and Klazar~\cite{kaiser:on-growth-rates:}, these polynomials have integer coefficients in the binomial coefficient basis, and we express them in this basis below.

All of the operations we survey are based on the notion of a \emph{block}, which is a contiguous sequence of entries. The block transposition operation was introduced by Bafner and Pevzner~\cite{bafna:sorting-by-tran:}. In a single \emph{block transposition} one is allowed to interchange two adjacent blocks of a permutation. Thus we may change
$$
\pi(1)\cdots\pi(i-1)\ \boxed{\pi(i)\cdots\pi(j-1)}\ \boxed{\pi(j)\cdots\pi(k-1)}\ \pi(k)\cdots\pi(n)
$$
into
$$
\pi(1)\cdots\pi(i-1)\ \boxed{\pi(j)\cdots\pi(k-1)}\ \boxed{\pi(i)\cdots\pi(j-1)}\ \pi(k)\cdots\pi(n).
$$

Plotting the permutation before and after a block transposition makes clear the connection to polynomial classes. The identity permutation of any length can be plotted as an increasing series of dots, which we represent as a straight line of positive slope. Transposing a horizontally contiguous subset of this line results in a grid of lines, as shown in Figure~\ref{fig-one-transpose}. 

\begin{figure}[t] \centering
  \begin{tikzpicture}[scale=.125]
    \draw [thick, line cap=round] (0,0) -- (3,3);
    \draw [thick, line cap=round] (3,3) -- (6,6);
    \draw [thick, line cap=round] (6,6) -- (9,9);
    \draw [thick, line cap=round] (9,9) -- (12,12);

    \node at (6,-2) {};

    \draw [darkgray, thick, line cap=round] (0,0) -- (12,0) -- (12,12) -- (0,12) -- cycle;
  \end{tikzpicture}
\quad\quad
  \begin{tikzpicture}[scale=.125]
    \draw (3,-.5) .. controls (3,-1) and (4.5,-.5) .. (4.5,-1);
    \draw (6,-.5) .. controls (6,-1) and (4.5,-.5) .. (4.5,-1);
    \draw[->] (4.5, -1) .. controls (4.5,-3) and (9,-3) .. (9, -.5);
    \node at (6,-2) {};

    \draw [darkgray, thick, fill=lightgray, line cap=round] (3,0) rectangle (6,12);

    \draw [thick, line cap=round] (0,0) -- (3,3);
    \draw [thick, line cap=round] (3,3) -- (6,6);
    \draw [thick, line cap=round] (6,6) -- (9,9);
    \draw [thick, line cap=round] (9,9) -- (12,12);

    \draw [darkgray, thick, line cap=round] (0,0) -- (12,0) -- (12,12) -- (0,12) -- cycle;
    \foreach \i in {3,6,9}{
      \draw [darkgray, thick, line cap=round] (0, \i) -- (12, \i);
      \draw [darkgray, thick, line cap=round] (\i, 0) -- (\i, 12);
    }
  \end{tikzpicture}
\quad\quad
  \begin{tikzpicture}[scale=.125]
    \draw [thick, line cap=round] (0,0) -- (3,3);
    \draw [thick, line cap=round] (3,6) -- (6,9);
    \draw [thick, line cap=round] (6,3) -- (9,6);
    \draw [thick, line cap=round] (9,9) -- (12,12);

    \node at (6,-2) {};

    \draw [darkgray, thick, line cap=round] (0,0) -- (12,0) -- (12,12) -- (0,12) -- cycle;
    \foreach \i in {3,6,9}{
      \draw [darkgray, thick, line cap=round] (0, \i) -- (12, \i);
      \draw [darkgray, thick, line cap=round] (\i, 0) -- (\i, 12);
    }
  \end{tikzpicture}
  \caption{Transposing a horizontally contiguous subset of entries 
           of the identity permutation, represented by the peg pattern $\p 1$, 
           results in the peg pattern $\p 1 \p 3 \p 2 \p 4$.}
  \label{fig-one-transpose}
\end{figure}
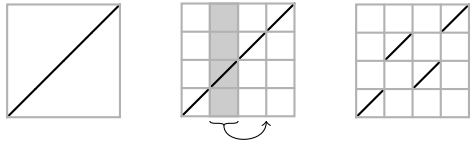

In the language of grid classes, the set of permutations which can be generated by a single block transposition from the identity is $\Grid(\p{1}\p{3}\p{2}\p{4})$. To compute the grid class of permutations which are at most two transpositions away from the identity, we simply repeat the operation on the figure again, in all possible ways. The result is a set of grid classes, the union of which is the desired set of permutations. Below we include the data for permutations which can be generated from the identity with $3$ or fewer block transpositions. Note that the polynomials given are only valid for sufficiently large $n$.
\begin{footnotesize}
$$
\begin{array}{r|rrrrrrrrrr|c}\hline
\diagnk&\ 1\ &\ 2\ &\ 3\ &\ 4\ &\ 5\ &\ 6\ &\ 7\ &\ 8\ &\ 9\ &10&\mbox{OEIS reference}\\[0.5ex]\hline
1&1&2&5&11&21&36&57&85&121&166&\OEISlink{A000292}\\[1ex]
&\multicolumn{10}{c|}{{n\choose 0}+{n\choose 2}+{n\choose 3}}&\\[1ex]
2&1&2&6&23&89&295&827&2017&4405&8812&\OEISlink{A228392}\\[1ex]
&\multicolumn{10}{c|}{{n\choose 0}+{n\choose 2}+2{n\choose 3}+8{n\choose 
4}+18{n\choose 5}+11{n\choose 6}}&\\[1ex]
3&1&2&6&24&120&675&3527&15484&56917&179719&\OEISlink{A228393}\\[1ex]
&\multicolumn{10}{c|}{\mbox{\scriptsize $\nc0 + \nc2 + 2\nc3 + 9\nc4 + 44\nc5 + 220\nc6 + 656\nc7 + 841\nc8 + 369\nc9$} }&\\[1ex]\hline
\end{array}
$$
\end{footnotesize}

There are a number of other well-studied block operations which model genome rearrangement. A \emph{prefix block transposition} is a special case of a block transposition in which the blocks must be at the beginning of the permutation. This method of rearrangement was first studied by Dias and Meidanis~\cite{dias:sorting-by-pref:}. The data for permutations which can be generated from the identity by $3$ or fewer prefix block transpositions is below; again, the polynomials are only valid for sufficiently large $n$.
\begin{footnotesize}
$$
\begin{array}{r|rrrrrrrrrr|c}\hline
\diagnk&\ 1\ &\ 2\ &\ 3\ &\ 4\ &\ 5\ &\ 6\ &\ 7\ &\ 8\ &\ 9\ &10&\mbox{OEIS reference}\\[0.5ex]\hline
1& 1& 2& 4& 7& 11& 16& 22& 29& 37& 46& \OEISlink{A000124}\\[1ex]
&\multicolumn{10}{c|}{\nc0 + \nc2  }&\\[1ex]
2& 1& 2& 6& 21& 61& 146& 302& 561& 961& 1546& \OEISlink{A228394}\\[1ex]
&\multicolumn{10}{c|}{\nc0 + \nc2  + 2\nc3 + 6\nc4 }&\\[1ex]
3& 1& 2& 6& 24& 116& 521& 1877& 5531& 13939& 31156& \OEISlink{A228395}\\[1ex]
&\multicolumn{10}{c|}{\nc0 + \nc2 + 2\nc3 + 9\nc4 + 40\nc5 + 90\nc6  }&\\[1ex]\hline
\end{array}
$$
\end{footnotesize}

A \emph{reversal} reverses one block in a permutation, thus transforming
$$
\pi(1)\cdots\pi(i-1)\ \boxed{\pi(i)\cdots\pi(j-1)}\ \pi(j)\cdots\pi(n)
$$
into
$$
\pi(1)\cdots\pi(i-1)\ \boxed{\pi(j-1)\cdots\pi(i)}\ \pi(j)\cdots\pi(n).
$$
Hence the class of permutations which can be generated by a single reversal is $\Grid(\p{1}\m{2}\p{3})$ (see Figure~\ref{fig-reversals-two-moves}) . This method of rearrangement was first introduced by Watterson, Ewens, Hall, and Morgan~\cite{watterson:the-chromosome-:}. Below is our data for this operation.

\begin{figure}[t] \centering
  \begin{tikzpicture}[scale=.15625]
    \draw[thick, line cap = round] (0,0) -- (4,4);
    \draw[thick, line cap = round] (4,8) -- (8,4);
    \draw[thick, line cap = round] (8,8) -- (12,12);
    \draw[darkgray, thick, line cap = round] (0,0) -- (12,0) -- (12,12) -- (0,12) -- cycle;
    \foreach \i in {4,8}{
      \draw[darkgray, thick, line cap = round] (0, \i) -- (12, \i);
      \draw[darkgray, thick, line cap = round] (\i, 0) -- (\i, 12);
    }
  \end{tikzpicture}
  \hspace{1pc}
  \begin{tikzpicture}[scale=.125]
    \draw[thick, line cap = round] (0,0) -- (3,3);
    \draw[thick, line cap = round] (12,12) -- (15,15);
    \draw[thick, line cap = round] (3, 12) -- (6,9);
    \draw[thick, line cap = round] (6,6) -- (9,9);
    \draw[thick, line cap = round] (9,6) -- (12,3);
    \draw[darkgray, thick, line cap = round] (0,0) -- (15,0) -- (15,15) -- (0,15) -- cycle;
    \foreach \i in {3,6,9,12}{
      \draw[darkgray, thick, line cap = round] (0, \i) -- (15, \i);
      \draw[darkgray, thick, line cap = round] (\i, 0) -- (\i, 15);
    }
  \end{tikzpicture}
  \hspace{1pc}
  \begin{tikzpicture}[scale=.125]
    \draw[thick, line cap = round] (0,0) -- (3,3);
    \draw[thick, line cap = round] (12,12) -- (15,15);
    \draw[thick, line cap = round] (3, 6) -- (6,3);
    \draw[thick, line cap = round] (6,6) -- (9,9);
    \draw[thick, line cap = round] (9,12) -- (12,9);
    \draw[darkgray, thick, line cap = round] (0,0) -- (15,0) -- (15,15) -- (0,15) -- cycle;
    \foreach \i in {3,6,9,12}{
      \draw[darkgray, thick, line cap = round] (0, \i) -- (15, \i);
      \draw[darkgray, thick, line cap = round] (\i, 0) -- (\i, 15);
    }
  \end{tikzpicture}
  \hspace{1pc}
  \begin{tikzpicture}[scale=.125]
    \draw[thick, line cap = round] (0,0) -- (3,3);
    \draw[thick, line cap = round] (12,12) -- (15,15);
    \draw[thick, line cap = round] (3, 9) -- (6,12);
    \draw[thick, line cap = round] (6,6) -- (9,3);
    \draw[thick, line cap = round] (9,9) -- (12,6);
    \draw[darkgray, thick, line cap = round] (0,0) -- (15,0) -- (15,15) -- (0,15) -- cycle;
    \foreach \i in {3,6,9,12}{
      \draw[darkgray, thick, line cap = round] (0, \i) -- (15, \i);
      \draw[darkgray, thick, line cap = round] (\i, 0) -- (\i, 15);
    }
  \end{tikzpicture}
  \hspace{1pc}
  \begin{tikzpicture}[scale=.125]
    \draw[thick, line cap = round] (0,0) -- (3,3);
    \draw[thick, line cap = round] (12,12) -- (15,15);
    \draw[thick, line cap = round] (3,9) -- (6,6);
    \draw[thick, line cap = round] (6,12) -- (9,9);
    \draw[thick, line cap = round] (9,3) -- (12,6);
    \draw[darkgray, thick, line cap = round] (0,0) -- (15,0) -- (15,15) -- (0,15) -- cycle;
    \foreach \i in {3,6,9,12}{
      \draw[darkgray, thick, line cap = round] (0, \i) -- (15, \i);
      \draw[darkgray, thick, line cap = round] (\i, 0) -- (\i, 15);
    }
  \end{tikzpicture}
  \caption{The class of permutations which are at most one reversal away from the identity is the grid class shown on the left. The class of permutations which are at most two reversals away from the identity is the union of the second through fifth grid classes represented.}
  \label{fig-reversals-two-moves}
\end{figure}
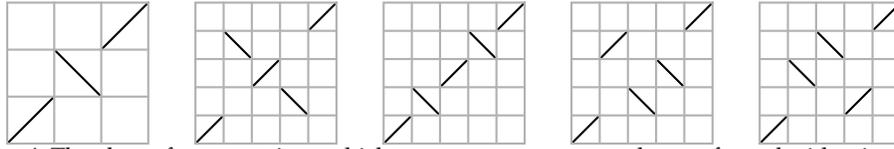

\begin{footnotesize}
$$
\begin{array}{r|rrrrrrrrrr|c}\hline
\diagnk&\ 1\ &\ 2\ &\ 3\ &\ 4\ &\ 5\ &\ 6\ &\ 7\ &\ 8\ &\ 9\ &10&\mbox{OEIS reference}\\[0.5ex]\hline
1&1&2&4&7&11&16&22&29&37&46&\OEISlink{A000124}\\[1ex]
&\multicolumn{10}{c|}{{n\choose 0}+{n\choose 2}}&\\[1ex]
2&1&2&6&22&63&145&288&516&857&1343&\OEISlink{A228396}\\[1ex]
&\multicolumn{10}{c|}{8{n\choose 0}-3{n\choose 1}+{n\choose 2}+4{n\choose 3}}&\\[1ex]
3&1&2&6&24&118&534&1851&5158&12264&25943&\OEISlink{A228397}\\[1ex]
&\multicolumn{10}{c|}{\mbox{ \scriptsize $318\nc0 -214\nc1 +131\nc2 -61\nc3 +20\nc4 +70\nc5 +35\nc6$  }}&\\[1ex]\hline
\end{array}
$$
\end{footnotesize}

By restricting reversals to initial segments of a permutation we obtain the \emph{prefix reversal} operation, which was introduced under the name \emph{pancake sorting} by ``Harry Dweighter'' (actually, Jacob E. Goodman) as a \emph{Monthly} problem~\cite{dweighter:elementary-prob:}. The grid class $\Grid(\m 1 \p 2)$, the example studied in the Section 2, is class of permutations which are one prefix reversal away from the identity.
\begin{footnotesize}
$$
\begin{array}{r|rrrrrrrrrr|c}\hline
\diagnk&1&2&3&4&5&6&7&8&9&10&\mbox{OEIS reference}\\[0.5ex]\hline
1&1&2&3&4&5&6&7&8&9&10&\OEISlink{A000027}\\[1ex]
&\multicolumn{10}{c|}{{n\choose 1}}&\\[1ex]
2&1&2&5&10&17&26&37&50&65&82&\OEISlink{A002522}\\[1ex]
&\multicolumn{10}{c|}{2 \nc0 -1 \nc1 + 2\nc2  }&\\[1ex]
3&1&2&6&21&52&105&186&301&456&657&\OEISlink{A228398}\\[1ex]
&\multicolumn{10}{c|}{ -3 \nc0 + 3 \nc1 - 2\nc2 + 6\nc3}&\\[1ex]\hline
\end{array}
$$
\end{footnotesize}

The cut-and-paste operation is a generalization of both the reversal operation and the block transposition operation. A single \emph{cut-and-paste} move consists of moving a single block of the permutation anywhere else in the permutation, with the option of reversing this block at the same time. Cut-and-paste sorting was introduced by Cranston, Sudborough, and West~\cite{cranston:short-proofs-fo:}.
\begin{footnotesize}
$$
\begin{array}{r|rrrrrrrrrr|c}\hline
\diagnk&1&2&3&4&5&6&7&8&9&10&\mbox{OEIS reference}\\[0.5ex]\hline
1&1&2&6&16&35&66&112&176&261&370&\OEISlink{A060354}\\[1ex]
&\multicolumn{10}{c|}{\nc1 + 3\nc3 }&\\[1ex]
2&1&2&6&24&120&577&2208&6768&17469&39603&\OEISlink{A228399}\\[1ex]
&\multicolumn{10}{c|}{-18\nc0 + 45\nc1 - 61\nc2 + 70\nc3 - 53\nc4 + 88\nc5 + 107\nc6}&\\[1ex]
3&1&2&6&24&120&720&5040&36757&223898&1055479&\OEISlink{A228400}\\[1ex]
&\multicolumn{10}{c|}{508264\nc0 - 280036\nc1 + 140012\nc2 - 57622\nc3 + 13839\nc4}&\\[1ex]
&\multicolumn{10}{c|}{+ 4136\nc5-5368\nc6 + 531\nc7 + 21125\nc8 + 12615\nc9}\\[1ex]
\hline
\end{array}
$$
\end{footnotesize}

Finally, the \emph{block interchange} operation is similar to the block transposition operation except that in this operation we are allowed to interchange any two blocks. This operation was first studied by Christie~\cite{christie:sorting-permuta:}.
\begin{footnotesize}
$$
\begin{array}{r|rrrrrrrrrr|c}\hline
\diagnk&1&2&3&4&5&6&7&8&9&10&\mbox{OEIS reference}\\[0.5ex]\hline
1&1&2&6&16&36&71&127&211&331&496&\OEISlink{A145126}\\[1ex]
&\multicolumn{10}{c|}{\nc0+\nc2+2\nc3+\nc4}&\\[1ex]
2&1&2&6&24&120&540&1996&6196&16732&40459&\OEISlink{A228401}\\[1ex]
&\multicolumn{10}{c|}{\nc0+\nc2+2\nc3+9\nc4+44\nc5+85\nc6+70\nc7+21\nc8}&\\[1ex]
3&1&2&6&24&120&720&5040&32256&169632&737364&\OEISlink{A256181}\\[1ex]
&\multicolumn{10}{c|}{\nc0+\nc2+2\nc3+9\nc4+44\nc5+265\nc6+1854\nc7+6769\nc8}\\[1ex]
&\multicolumn{10}{c|}{+12824\nc9+13125\nc{10}+6930\nc{11}+1485\nc{12}}\\[1ex]
\hline
\end{array}
$$
\end{footnotesize}


The Python code used to perform these computations is available at
\begin{center}
\url{https://github.com/cheyneh/polypermclass}.
\end{center}

\bigskip
\noindent{\bf Acknowledgments:} We thank Michael Engen, Jay Pantone, and the anonymous referees for their many comments which improved the presentation of the paper. We are additionally grateful to Jay Pantone for performing the computations on permutations at most $3$ block interchanges away from the identity.

\bigskip

\bibliographystyle{acm}
\bibliography{../../refs}

\def\cprime{$'$}
\begin{thebibliography}{10}

\bibitem{albert:simple-permutat:}
{\sc Albert, M.~H., and Atkinson, M.~D.}
\newblock Simple permutations and pattern restricted permutations.
\newblock {\em Discrete Math. 300}, 1-3 (2005), 1--15.

\bibitem{albert:geometric-grid-:}
{\sc Albert, M.~H., Atkinson, M.~D., Bouvel, M., Ru{\v{s}}kuc, N., and Vatter,
  V.}
\newblock Geometric grid classes of permutations.
\newblock {\em Trans. Amer. Math. Soc. 365\/} (2013), 5859--5881.

\bibitem{albert:permutation-cla:}
{\sc Albert, M.~H., Atkinson, M.~D., and Brignall, R.}
\newblock Permutation classes of polynomial growth.
\newblock {\em Ann. Comb. 11}, 3--4 (2007), 249--264.

\bibitem{albert:the-insertion-e:}
{\sc Albert, M.~H., Linton, S., and Ru\v{s}kuc, N.}
\newblock The insertion encoding of permutations.
\newblock {\em Electron. J. Combin. 12}, 1 (2005), Paper 47, 31 pp.

\bibitem{bafna:sorting-by-tran:}
{\sc Bafna, V., and Pevzner, P.~A.}
\newblock Sorting by transpositions.
\newblock {\em SIAM J. Discrete Math. 11}, 2 (1998), 224--240.

\bibitem{balogh:hereditary-prop:ordgraphs}
{\sc Balogh, J., Bollob{\'a}s, B., and Morris, R.}
\newblock Hereditary properties of ordered graphs.
\newblock In {\em Topics in Discrete Mathematics}, M.~Klazar,
  J.~Kratochv{\'\i}l, M.~Loebl, J.~Matou{\v{s}}ek, R.~Thomas, and P.~Valtr,
  Eds., vol.~26 of {\em Algorithms Combin.} Springer, Berlin, 2006,
  pp.~179--213.

\bibitem{bassino:combinatorial-s:}
{\sc Bassino, F., Bouvel, M., Pierrot, A., Pivoteau, C., and Rossin, D.}
\newblock Combinatorial specification of permutation classes.
\newblock In {\em 24th {I}nternational {C}onference on {F}ormal {P}ower
  {S}eries and {A}lgebraic {C}ombinatorics ({FPSAC} 2012)}, Discrete Math.
  Theor. Comput. Sci. Proc., AR. Assoc. Discrete Math. Theor. Comput. Sci.,
  Nancy, 2012, pp.~781--792.

\bibitem{bollobas:hereditary-and-:BCC:}
{\sc Bollob{\'a}s, B.}
\newblock Hereditary and monotone properties of combinatorial structures.
\newblock In {\em Surveys in Combinatorics 2007\/} (2007), A.~Hilton and
  J.~Talbot, Eds., vol.~346 of {\em London Mathematical Society Lecture Note
  Series}, Cambridge University Press, pp.~1--39.

\bibitem{brignall:simple-permutat:alg:}
{\sc Brignall, R., Huczynska, S., and Vatter, V.}
\newblock Simple permutations and algebraic generating functions.
\newblock {\em J. Combin. Theory Ser. A 115}, 3 (2008), 423--441.

\bibitem{christie:sorting-permuta:}
{\sc Christie, D.~A.}
\newblock Sorting permutations by block-interchanges.
\newblock {\em Inform. Process. Lett. 60}, 4 (1996), 165--169.

\bibitem{cranston:short-proofs-fo:}
{\sc Cranston, D.~W., Sudborough, I.~H., and West, D.~B.}
\newblock Short proofs for cut-and-paste sorting of permutations.
\newblock {\em Discrete Math. 307}, 22 (2007), 2866--2870.

\bibitem{dias:sorting-by-pref:}
{\sc Dias, Z., and Meidanis, J.}
\newblock Sorting by prefix transpositions.
\newblock In {\em Proceedings of the 9th International Symposium on String
  Processing and Information Retrieval\/} (London, UK, UK, 2002), SPIRE 2002,
  Springer-Verlag, pp.~65--76.

\bibitem{dweighter:elementary-prob:}
{\sc Dweighter, H.}
\newblock Elementary problems and solutions, problem {E}2569.
\newblock {\em Amer. Math. Monthly 82\/} (1975), 1010.

\bibitem{fertin:combinatorics-o:}
{\sc Fertin, G., Labarre, A., Rusu, I., Tannier, {\'E}., and Vialette, S.}
\newblock {\em Combinatorics of genome rearrangements}.
\newblock Computational Molecular Biology. MIT Press, Cambridge, MA, 2009.

\bibitem{higman:ordering-by-div:}
{\sc Higman, G.}
\newblock Ordering by divisibility in abstract algebras.
\newblock {\em Proc. London Math. Soc. (3) 2\/} (1952), 326--336.

\bibitem{huczynska:grid-classes-an:}
{\sc Huczynska, S., and Vatter, V.}
\newblock Grid classes and the {F}ibonacci dichotomy for restricted
  permutations.
\newblock {\em Electron. J. Combin. 13\/} (2006), R54, 14 pp.

\bibitem{kaiser:on-growth-rates:}
{\sc Kaiser, T., and Klazar, M.}
\newblock On growth rates of closed permutation classes.
\newblock {\em Electron. J. Combin. 9}, 2 (2003), Paper 10, 20 pp.

\bibitem{klazar:overview-of-som:}
{\sc Klazar, M.}
\newblock Overview of some general results in combinatorial enumeration.
\newblock In {\em Permutation Patterns}, S.~Linton, N.~Ru{\v{s}}kuc, and
  V.~Vatter, Eds., vol.~376 of {\em London Mathematical Society Lecture Note
  Series}. Cambridge University Press, 2010, pp.~3--40.

\bibitem{sloane:the-on-line-enc:}
{\sc {The {O}n-line {E}ncyclopedia of {I}nteger {S}equences}}.
\newblock {P}ublished electronically at \texttt{http://oeis.org/}.

\bibitem{vatter:finding-regular:}
{\sc Vatter, V.}
\newblock Finding regular insertion encodings for permutation classes.
\newblock {\em J. Symb. Comput. 47\/} (2012), 259--265.

\bibitem{vatter:permutation-cla:}
{\sc Vatter, V.}
\newblock Permutation classes.
\newblock In {\em Handbook of Enumerative Combinatorics}, M.~B{\'o}na, Ed. CRC
  Press, 2015, pp.~754--833.

\bibitem{watterson:the-chromosome-:}
{\sc Watterson, G.~A., Ewens, W.~J., Hall, T.~E., and Morgan, A.}
\newblock {The chromosome inversion problem.}
\newblock {\em Journal of Theoretical Biology}, 99 (1982), 1--7.

\end{thebibliography}

\end{document}